\documentclass{amsart}

\usepackage{amsthm}
\usepackage[backend=biber, maxbibnames=99, doi=false, isbn=false, url=false]{biblatex}
\usepackage{booktabs}
\usepackage{mathtools}
\usepackage{multirow}
\usepackage{siunitx}
\usepackage{xcolor}
\usepackage{todonotes}
\usepackage{verbatim}
\usepackage{threeparttable}
\usepackage{hyperref}
\usepackage{cleveref}

\theoremstyle{plain}
\newtheorem{theorem}{Theorem}[section]

\newtheorem{lemma}[theorem]{Lemma}

\theoremstyle{definition}

\newtheorem{algorithm}[theorem]{Algorithm}

\theoremstyle{remark}

\newtheorem{remark}[theorem]{Remark}

\NewDocumentCommand{\F}{}{\mathbb{F}}
\NewDocumentCommand{\LL}{}{\mathcal{L}}
\NewDocumentCommand{\OO}{}{\mathcal{O}}
\RenewDocumentCommand{\P}{}{\mathbb{P}}
\NewDocumentCommand{\Q}{}{\mathbb{Q}}
\NewDocumentCommand{\Z}{}{\mathbb{Z}}

\DeclareMathOperator{\Cl}{Cl}
\let\div\relax
\DeclareMathOperator{\div}{div}
\DeclareMathOperator{\Div}{Div}
\DeclareMathOperator{\Frac}{Frac}

\DeclareMathOperator{\rank}{rank}
\DeclareMathOperator{\Supp}{Supp}

\newcolumntype{N}[1]{S[table-format=#1, table-align-text-before=false]}
\newcolumntype{D}[1]{S[table-format=#1, round-mode=places, round-precision=1, round-pad=false, table-align-text-after=false]}

\title[Computing class groups and gonalities of algebraic curves]{Computing class groups and gonalities of algebraic curves over finite fields}
\author{Maarten Derickx and Kenji Terao}

\begin{document}

\begin{abstract}
We give practical algorithms for computing the divisor class group and the gonality of a curve over a finite field, achieving several orders of magnitude speedup over existing methods for sufficiently large genus or residue field. The approach relies on introducing a precomputation step involving power series-expansions, which allows for an efficient amortized computation of large numbers of Riemann-Roch spaces.
\end{abstract}

    \maketitle
	\section{Introduction} \label{sec:introduction}
    The divisor class group\footnote{In the setting where our algorithms are applicable, the divisor class group is isomorphic to the Picard group.} and the gonality are two important invariants of a curve. Being able to explicitly compute these invariants over finite fields has many applications. For example, explicit computation of gonalities over finite fields has been used to study the degree $d$ points on modular curves in \cite{derickx2014gonality, derickxsutherland2017, najman2024gonality}. Efficiently computing gonalities over finite fields is a bottleneck in extending those results to higher degrees.
    
    The ability to explicitly compute class groups of curves over finite fields in Magma has seen even wider applications to current number theory research. For instance, such computations are necessary to perform the Mordell-Weil sieve \cite{bruinstoll10mwsieve}, which is an essential tool for studying rational points on (symmetric powers of) curves. Another application is to the study of torsion subgroups of Jacobians of curves over number fields. These class group computations can often take several hours, as can be seen, for example, at the end of the proof of \cite[Prop. 3.4]{dkss2023}. In prior work, the first author often had to resort to finding other proof strategies since class group computations did not terminate after several weeks. This highlights the importance of having explicit implementations of these algorithms that are not only fast in theory but also fast in practice, and is the main motivation for this article.
    
    In this paper, we present algorithms that compute the class group and gonality of a curve over a finite field. Such algorithms have been developed previously in the literature. For instance, a strategy for computing the gonality can already be found in \cite[\S 3]{derickx2014gonality}. Similarly, algorithms for the computation of the structure of the class group can be found in \cite{enge2011discretelog,hess2003classgroups,hess1999phdthesis}. The main contribution of this paper consists of implementations of our new algorithms that are up to several orders of magnitude faster in practice than existing implementations; see Section~\ref{sec:timings} for precise timing data. This substantial improvement enables many new computations that were not practical previously. For example, the gonality algorithm in this paper will be used to compute lower bounds on the $\Q$-gonality for the millions of modular curves currently being added to the LMFDB \cite{lmfdb} in forthcoming work \cite{modularcurves-wip}. Additionally, Filip Najman \cite{najman2026torsion} has already used this same algorithm to extend the results of \cite{derickxsutherland2017} to degree $7$.

    \subsection{Computing Riemann-Roch spaces}
    
    Existing algorithms for computing class groups and gonalities require computing Riemann-Roch spaces for thousands, if not millions, of divisors. So, this computation is a major bottleneck in these algorithms. Existing methods for computing such Riemann-Roch spaces, such as the one proposed by Hess in \cite{hess2002riemannroch}, use a combination of ideal and polynomial arithmetic. This is quite fast for computing a single Riemann-Roch space. However, it becomes infeasible to do this computation millions of times on higher genus curves.

    In Section \ref{sec:rr_spaces}, we present a new approach to computing many related Riemann-Roch spaces. Rather than computing each Riemann-Roch space separately, this approach precomputes the Riemann-Roch space of a single large divisor as well as series expansions for a basis of this space. While this precomputation step can be quite time-consuming, as it also relies on ideal and polynomial arithmetic, subsequent Riemann-Roch calculations are then reduced to linear algebraic problems, which can be solved extremely quickly. This provides a very efficient amortized algorithm for computing many Riemann-Roch spaces at once.

    Applying this technique to existing algorithms for computing gonalities and class groups of curves forms the main idea behind our new algorithms and subsequent implementations. For gonalities, this is mostly straightforward, simply requiring an application of the Riemann-Roch theorem. The resulting algorithm is described in Section \ref{sec:gonalities}. In the case of class groups, the necessary modifications are somewhat more involved, and are described in Section \ref{sec:class_groups}.

    \subsection{Data availability and reproducibility.}
    All the code and data for this article are available at \url{https://github.com/nt-lib/CurveArith}. Our code depends on Magma \cite{magma} and has been verified to work with versions v2.29-1 and v2.29-4.

    The timing data of Section \ref{sec:timings} has been obtained by running our code on a server with an AMD Opteron 6380 CPU @ 2.30GHz with 32 cores (64 threads) and 128GB of RAM running Ubuntu 22.04.5 LTS and Magma v2.29-1. 

    The \texttt{README.md} file in the GitHub repository contains more detailed instructions on how to use the code. In addition, it contains instructions on how to recreate the timing data used in Section~\ref{sec:timings} and on how to run the automated tests which verify that the code is working as intended.

    In Table \ref{tab:gonality_x0} we reproduce the results from \cite[Proposition 5.15]{najman2024gonality} on the gonality lower bounds for $X_0(N)$ over $\F_p$, with a few caveats. First of all, the table in the aforementioned proposition gives the prime $p=6$ for $N=132, 148, 277$; we follow the accompanying code instead and use the prime 5. Similarly, the table states a lower bound of $8$ on the $\F_5$-gonality of $X_0(154)$ while the accompanying code computes a lower bound on the $\F_3$-gonality. We verified that both gonalities are at least 8.
    
    We also note that the timing data in Table \ref{tab:gonality_x0} is not directly comparable to the timing data from \cite{najman2024gonality}, due to various reasons. Firstly, our computations were done on a machine with a lower single thread performance according to \url{cpubenchmark.net/single-thread/server}. Moreover, the data of \cite{najman2024gonality} includes the time taken to compute a model for $X_0(N)$, which we have omitted. The models and sets of divisors used are also different in both cases; see \cite[Propositions 5.13, 5.14]{najman2024gonality}. Nevertheless, we found that their overall computation time of 860 hours is relatively close to the time of 1240 hours we estimate it would take to reproduce their computations with Algorithm \ref{alg:has_function_original}. Our reproduction being slower is in line with the cpubenchmark. The reproduction of the gonality computation with our new Algorithm \ref{alg:has_function_new} took 10 hours, showing that we achieve a speedup of roughly two orders of magnitude in this concrete application.
    
    \subsection{Acknowledgements}

    The gonality part of this article grew out of an ICERM project suggested by Andrew V. Sutherland at the Algebraic Points on Curves workshop in June 2025. We would like to thank the participants of this group, Abbey Bourdon, Renee Bell, Sameera Vemulapalli and Travis Morrison, for their collaboration at ICERM. The second author thanks Filip Najman for their generous hospitality during the author's stay at the University of Zagreb, during which portions of this work were carried out. The authors also thank the referees for their many helpful comments on both this manuscript and the associated \texttt{CurveArith} repository.
    
    The first author was supported by the project ``Implementation of cutting-edge research
    and its application as part of the Scientific Center of Excellence for Quantum and Complex Systems, and Representations of Lie Algebras'', PK.1.1.10.0004, European Union,
    European Regional Development Fund and by the Croatian Science Foundation under the
    project no. IP-2022-10-5008.
    
    The second author was supported by the Additional Funding Programme for Mathematical Sciences, delivered by EPSRC (EP/V521917/1) and the Heilbronn Institute for Mathematical Research.
    
	\section{Notation} \label{sec:notation}

    We fix the following notation which will persist throughout the paper. Let $X$ be a smooth, projective, geometrically integral curve over a finite field $k$. We denote by $k(X) = \OO_{X, \xi}$ the function field of $X$, where $\xi$ is the generic point of $X$.

    Let $x \in X$ be a closed point of $X$. We denote by $\mathfrak{m}_x$ the maximal ideal of the DVR $\OO_{X, x}$, and $k(x) = \OO_{X, x} / \mathfrak{m}_x$ the residue field of $x$. We fix a uniformizer $q_x \in \OO_{X, x}$ at $x$, that is to say, $q_x$ generates the ideal $\mathfrak{m}_x$. There is a canonical isomorphism $\Frac(\OO_{X, x}) \cong k(X)$, and we denote by $v_x : k(X)^{\times} \to \Z$ the valuation induced by the standard valuation on $\OO_{X, x}$.
    
    We view $\OO_{X, x}$ as a subring of its completion $\widehat{\OO}_{X, x}$. Since the extension $k(x) / k$ is a finite separable extension, Hensel's lemma yields a subfield of $\widehat{\OO}_{X, x}$ isomorphic to $k(x)$. The map $q \mapsto q_x$ gives an isomorphism $\widehat{\OO}_{X, x} \cong k(x)[[q]]$ of $k(x)$-algebras. More generally, we obtain an embedding $k(X) \cong \Frac(\OO_{X, x}) \hookrightarrow \Frac(\widehat{\OO}_{X, x}) \cong k(x)(\!(q)\!)$. We call the image of $f \in k(X)$ under this map the Laurent series expansion or $q$-expansion of $f$ at $x$.

    We denote by $\Div X$ the group of Weil divisors of $X$, and by $\Cl X$ the divisor class group of $X$. The latter is a finitely generated abelian group of rank 1, whose free part is generated by the class of a degree 1 divisor on $X$.

    Throughout the paper, we shall assume that we are able to do simple arithmetic in the function field $k(X)$, compute a basis of the space $\Omega^1_X$ of holomorphic differentials on $X$ and enumerate the closed points of $X$ of a given degree $d$. Given a closed point $x \in X$, we will also assume that we are able to compute a $k$-basis of the residue field $k(x)$, evaluate the quotient map $\OO_{X, x} \to k(x)$ at functions $f \in \OO_{X, x}$ and compute the valuation $v_x(f)$ for any function $f \in k(X)^\times$.

    In practice, this functionality is provided by existing intrinsics in Magma \cite{magma}, which are in turn based on the work of Hess in \cite{hess2002riemannroch} and the references therein. Briefly, the function field $k(X)$ is represented as a finite extension $k(t)[\alpha]$ of the rational function field $k(t)$. This is equivalent to equipping the curve $X$ with a fixed map $\pi : X \to \P^1_k$. We denote by $\OO_0$ the maximal finite order of $k(X)$, that is to say, the integral closure of $k[t]$ inside of $k(X)$. The closed points of $X$ not lying above $\infty \in \P^1_k$ correspond to the maximal ideals of $\OO_0$, while those lying above $\infty$ are represented by maximal ideals in the similarly defined maximal infinite order of $k(X)$. The operations described in the previous paragraph are then provided in Magma by the \texttt{BasisOfHolomorphicDifferentials}, \texttt{Places}, \texttt{ResidueClassField}, \texttt{Evaluate} and \texttt{Valuation} intrinsics.

    While these existing intrinsics provide a practical and convenient base for the development of our algorithms, they may not yield the best performance, especially in certain edge cases; see Remark \ref{rmk:x0_198}. It would be a worthwhile effort to develop a representation of curves and points that is especially suited to our algorithms, as we suspect this might lead to further performance gains.

	\section{Computing Riemann-Roch spaces} \label{sec:rr_spaces}
	
	Let $D \in \Div X$ be a divisor on $X$. The Riemann-Roch space $\LL(D)$ of $D$ is defined to be
	\[
		\LL(D) = \{0\} \cup \{f \in k(X)^{\times} : v_x(f) \geq -v_x(D) \quad \forall x \in X\}.
	\]
	This is a $k$-vector space, and we denote by $\ell(D) = \dim_k \LL(D)$ its dimension. In this section, we present a new approach to computing many related Riemann-Roch spaces at once.
	
	The basis of this algorithm stems from the following observation. Consider a set of functions $\{f_1, \dots, f_n\} \in k(X)$ and a closed point $x \in X$. Let
	\[
		f_i = \sum_{j \in \Z} a_{i, j} q^j \in k(x) (\!(q)\!)
	\]
	be the Laurent series expansion of the function $f_i$ at $x$, for all $1 \leq i \leq n$. Given a linear combination $f = b_1 f_1 + \dots + b_n f_n$, with $b_i \in k$, the Laurent series expansion of $f$ at $x$ is easily computed; we have
	\[
		f = \sum_{j \in \Z} \left(\sum_{i = 1}^{n} a_{i, j} b_i\right) q^j.
	\]
	The valuation of the function $f$ at $x$ can now be read off from this Laurent series expansion: if $v_x(f_i) \geq m$ for all $1 \leq i \leq n$, then $v_x(f) \geq m + l$ if and only if
	\[
		\sum_{i = 1}^{n} a_{i, m + j} b_i = 0
	\]
	for all $0 \leq j < l$. This is a linear algebraic condition; it is equivalent to the vector $(b_1, \dots, b_n) \in k(x)^n$ being contained in the nullspace of the matrix
	\[
		\begin{bmatrix}
			a_{1, m} & \cdots & a_{1, m + l - 1} \\
			\vdots & & \vdots \\
			a_{n, m} & \cdots & a_{n, m + l - 1}
		\end{bmatrix}.
	\]
	If we take the set of functions $\{f_1, \dots, f_n\}$ to form a basis of some Riemann-Roch space $\LL(D_0)$, we have thus shown that the Riemann-Roch space $\LL(D_0 - l x)$ can be identified with the nullspace of the above matrix.
	
	The following theorem, which gives a more technical version of the above argument, forms the basis of our algorithm for computing Riemann-Roch spaces.
	
	\begin{theorem} \label{thm:rr_matrix}
		Let $D_0 \in \Div X$ be a divisor on X, and let $\{f_1, \dots, f_{\ell}\}$ be a $k$-basis of $\LL(D_0)$, where $\ell = \ell(D_0)$. Note that this choice of basis defines an isomorphism $\LL(D_0) \cong k^{\ell}$ of $k$-vector spaces.
		
		For any closed point $x \in X$, fix a function $g_x \in k(X)^\times$ and a set of functions $\{u_{x, 1}, \dots, u_{x, \deg(x)}\} \subset \widehat{\OO}_{X, x}$ whose residue classes form a $k$-basis of $k(x)$. For any $i \in \{1, \dots, \ell\}$, consider the unique expression
		\[
			f_i g_x = \sum_{j = 0}^{\infty} \left(a_{x, i, j, 1} u_{x, 1} + \dots + a_{x, i, j, \deg(x)} u_{x, \deg(x)}\right) q_x^{j - v_x(D_0) + v_x(g_x)},
		\]
		where all coefficients $a_{x, i, j, l}$ lie in $k$. For all $i, j$, we let $\mathbf{a}_{x, i, j} \in k^{\deg(x)}$ be the vector
		\[
			\mathbf{a}_{x, i, j} = (a_{x, i, j, 1}, \dots, a_{x, i, j, \deg(x)}).
		\]
		
		Let $D = m_{x_1} x_1 + \dots + m_{x_n} x_n \in \Div X$ be an effective divisor on $X$, where the points $x_i$ are distinct. Then the isomorphism $\LL(D_0) \cong k^{\ell}$ identifies the subspace $\LL(D_0 - D)$ with the nullspace of the $\ell \times \deg(D)$ matrix
		\[
			A = \begin{bmatrix}
			\mathbf{a}_{x_1, 1, 0} & \cdots & \mathbf{a}_{x_1, 1, m_{x_1} - 1} & \cdots & \mathbf{a}_{x_n, 1, 0} & \cdots & \mathbf{a}_{x_n, 1, m_{x_n} - 1} \\
			\vdots & & \vdots & & \vdots & & \vdots \\
			\mathbf{a}_{x_1, \ell, 0} & \cdots & \mathbf{a}_{x_1, \ell, m_{x_1} - 1} & \cdots & \mathbf{a}_{x_n, \ell, 0} & \cdots & \mathbf{a}_{x_n, \ell, m_{x_n} - 1}
		\end{bmatrix}.
		\]
		In particular, we have
		$
			\ell(D_0 - D) = \ell(D_0) - \rank A.
		$
	\end{theorem}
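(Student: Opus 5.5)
The plan is to reduce the global statement to a local, point-by-point computation of valuations, one closed point $x$ at a time. The conditions defining $\LL(D_0 - D)$ inside $\LL(D_0)$ only concern the points in the support of $D$. For $f = b_1 f_1 + \dots + b_\ell f_\ell \in \LL(D_0)$ with $b \in k^\ell$, we have $f \in \LL(D_0 - D)$ if and only if
\[
v_{x_s}(f) \geq -v_{x_s}(D_0) + m_{x_s} \quad \text{for every } s \in \{1, \dots, n\}.
\]
At every other point $y$ the required inequality $v_y(f) \geq -v_y(D_0)$ already holds because $f \in \LL(D_0)$. Since $v_x(g_x)$ is a fixed integer, the condition at $x = x_s$ is equivalent to
\[
v_x(f g_x) \geq m_x - v_x(D_0) + v_x(g_x).
\]

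Next we justify the stated expansion and make it linear. Each $f_i$ lies in $\LL(D_0)$, so $v_x(f_i g_x) \geq -v_x(D_0) + v_x(g_x) =: N$. Hence $f_i g_x q_x^{-N}$ lies in $\widehat{\OO}_{X,x}$. We expand it $q_x$-adically: any element $h \in \widehat{\OO}_{X,x}$ can be written uniquely as $\sum_j c_j q_x^j$ with each $c_j$ in the $k$-span of $u_{x,1}, \dots, u_{x,\deg(x)}$. To see this, reduce $h$ modulo $\mathfrak{m}_x$ and write the residue uniquely in the basis of residue classes. Subtract the corresponding combination, divide by $q_x$, and iterate. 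The resulting series converges by completeness, and uniqueness follows by the same induction.

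This expansion is $k$-linear in $h$, by uniqueness. Therefore
\[
f g_x q_x^{-N} = \sum_j \Bigl( \sum_l \bigl( \textstyle\sum_i b_i a_{x,i,j,l} \bigr) u_{x,l} \Bigr) q_x^j .
\]
The key claim is then: $v_x(f g_x q_x^{-N}) \geq m$ if and only if the coefficients $\sum_i b_i a_{x,i,j,l}$ vanish for all $0 \leq j < m$ and all $l$.

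The reverse direction is immediate. For the forward direction, suppose the first nonzero coefficient block occurs at some $j < m$. Its leading term is a unit, since its residue is a nonzero combination of a basis of $k(x)$. This forces the valuation to equal exactly $j$, a contradiction. Identifying the expansion coefficients with valuation conditions in this way is the only step requiring care, and I expect it to be the main (though mild) obstacle.

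Taking $m = m_x$, the condition at $x$ says exactly that $b$ is orthogonal to the $\deg(x)\, m_x$ columns of $A$ belonging to the block of $x$. That is, $bA$ vanishes on those columns. Intersecting over $x_1, \dots, x_n$ shows that $\LL(D_0 - D)$ corresponds to the (left) nullspace $\{ b : bA = 0 \}$.

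The dimension formula then follows from rank–nullity: this nullspace has dimension $\ell - \rank A$, which gives $\ell(D_0 - D) = \ell(D_0) - \rank A$.
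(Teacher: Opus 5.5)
Your proposal is correct and follows essentially the same route as the paper. Both reduce to the local conditions $v_x(fg_x) \geq m_x - v_x(D_0) + v_x(g_x)$ at the points of $\Supp(D)$, and both show these are equivalent to the vanishing of the first $m_x$ coefficient blocks by peeling off leading terms (the paper's induction is your ``first nonzero block is a unit'' argument). You also justify the existence, uniqueness and $k$-linearity of the expansion, which the paper takes for granted, and you correctly note that ``nullspace'' means the left nullspace $\{b : bA = 0\}$.
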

	
	\begin{proof}
		Let $f = b_1 f_1 + \dots + b_\ell f_\ell$ be a non-zero element of $\LL(D_0)$, with $b_i \in k$. By definition, we have that $f \in \LL(D_0 - D)$ if and only if
		$
			v_x(f) \geq v_x(D) - v_x(D_0)
		$
		for all closed points $x \in \{x_1, \dots, x_n\}$. Fix such a closed point $x$. We have
		\[
			v_x(f) \geq v_x(D) - v_x(D_0) \iff v_x(f g_x) \geq v_x(D) - v_x(D_0) + v_x(g_x).
		\]
		Moreover, we can write the product $f g_x$ as
		\[
			f g_x = \sum_{i = 1}^{\ell} b_i f_i g_x = \sum_{j = 0}^{\infty} \left( \sum_{l = 1}^{\deg(x)} \left( \sum_{i = 1}^{\ell} b_i a_{x, i, j, l} \right) u_{x, l} \right) q_x^{j - v_x(D_0) + v_x(g_x)}.
		\]
		Since $q_x$ is a uniformizer at $x$, it follows that $v_x(f g_x) \geq 1 - v_x(D_0) + v_x(g_x)$ if and only if
		\[
			v_x \! \left( \sum_{l = 1}^{\deg(x)} \left( \sum_{i = 1}^{\ell} b_i a_{x, i, 0, l} \right) u_{x, l} \right) > 0.
		\]
		As the residue classes of the functions $u_{x, l}$ form a $k$-basis of $k(x)$, this latter condition holds if and only if
		\[
			\sum_{i = 1}^{\ell} b_i a_{x, i, 0, l} = 0
		\]
		for all $1 \leq l \leq \deg(x)$. In this case, we can then write the product $f g_x$ as
		\[
			f g_x = \sum_{j = 1}^{\infty} \left( \sum_{l = 1}^{\deg(x)} \left( \sum_{i = 1}^{\ell} b_i a_{x, i, j, l} \right) u_{x, l} \right) q_x^{j - v_x(D_0) + v_x(g_x)}.
		\]
		Proceeding inductively, we find that $v_x(f g_x) \geq v_x(D) - v_x(D_0) + v_x(g_x)$ if and only if
		\[
			\sum_{i = 1}^{\ell} b_i a_{x, i, j, l} = 0
		\]
		for all $0 \leq j < v_x(D)$ and all $1 \leq l \leq \deg(x)$. The statement of the theorem now follows by the construction of the vectors $\mathbf{a}_{x, i, j}$.
	\end{proof}
	
	In addition to generalizing the aforementioned observation to arbitrary divisors $D$, this result incorporates two main differences with the former. Firstly, the functions $f_i$ can be multiplied by an arbitrary function $g_x \in k(X)^\times$. As will be described below, this will allow us to normalize the valuations of the functions $f_i$ without needing to compute large powers of the uniformizing parameter $q_x$.
	
	Secondly, rather than work with the Laurent series expansion of the functions $f_i$, we work with a more general series expansion involving the functions $u_{x, l}$. Taking the standard Laurent series expansion corresponds to choosing the functions $u_{x, l}$ to be a $k$-basis of the subring $k(x) \subset \widehat{\OO}_{X, x}$. Working with this more general formulation will allow us to avoid computing the subring $k(x) \subset \widehat{\OO}_{X, x}$, which is usually done via Hensel lifting and can be quite time-consuming and cumbersome.
	
	This result provides a very efficient way of computing many Riemann-Roch spaces of the form $\LL(D_0 - D)$, where $D_0$ is a fixed divisor on $X$, and $D$ is an effective divisor on $X$ with bounded degree. Indeed, given a basis of the Riemann-Roch space $\LL(D_0)$, which can be efficiently obtained using the method of Hess \cite{hess2002riemannroch}, we can precompute and cache the necessary vectors $\mathbf{a}_{x, i, j}$, as these depend only on the fixed divisor $D_0$ and the fixed choice of the functions $g_x$ and $u_{x, l}$. Computing the Riemann-Roch space $\LL(D_0 - D)$ then reduces to computing the nullspace of the matrix $A$ appearing in Theorem \ref{thm:rr_matrix}, which can be done extremely quickly.
	
	The only remaining question is how to compute the vectors $\mathbf{a}_{x, i, j}$ efficiently. Before tackling this however, we first describe a new procedure for computing a uniformizer $q_x$ at a closed point $x \in X$, which may be of independent interest. While algorithms for computing such uniformizers already exist, such as the \texttt{UniformizingParameter} intrinsic in Magma, we have found that the uniformizers returned by such algorithms are often comprised of polynomials of large degree. This tends to bog down algorithms which use such uniformizers significantly, as polynomial arithmetic becomes much slower.

    Instead, we propose the following scheme based on coordinate functions on a model of $X$.
   
    \begin{algorithm} \label{alg:uniformizer}
		(Uniformizing parameter)
		
		Input: A curve $X \subseteq \P^n_k$ over a perfect field $k$ and a closed point $x \in X$ at which $X$ is smooth.
		
		Output: A function $f \in k(X)$ such that $v_x(f) = 1$.
		
		\begin{enumerate}
			\item Find a coordinate $y \in \{y_0, \dots, y_n\}$ such that $y(x) \neq 0$.
			\item For each $0 \leq i \leq n$ such that $y_i \neq y$:
			\begin{enumerate}
				\item Compute $a := \frac{y_i}{y}(x) \in k(x)$.
				\item Compute the minimal polynomial $\mu_a \in k[t]$ of $a$ over $k$.
				\item Evaluate $f := \mu_a(\frac{y_i}{y}) \in k(X)$.
				\item If $v_x(f) = 1$, return $f$ and terminate.
			\end{enumerate}
		\end{enumerate}
	\end{algorithm}

    The advantage of the above algorithm over the one currently in Magma is that the uniformizer will always be a polynomial of degree at most $[k(x):k]$ in one of the coordinate functions. This yields a much smaller expression than the built-in Magma function, which sometimes yields screen-filling expressions even for $k$-rational points.

    The correctness of the above algorithm relies on the following fact: if $\overline{x}$ is a point of the base change $X_{\overline{k}}$ lying above $x \in X$, where $\overline{k}$ is the algebraic closure of $k$, then one of the functions $\frac{y_i}{y} - \frac{y_i}{y}(\overline{x})$ must be a uniformizer at $\overline{x}$, since $X_{\overline{k}}$ is smooth at $\overline{x}$. Since $\overline{x}$ lies above $x$, we obtain a homomorphism of local rings $\OO_{X, x} \to \OO_{X_{\overline{k}}, \overline{x}}$. Now, we have
    \[
        \mu_a\left(\frac{y_i}{y}\right) = \prod_{\sigma \in \mathrm{Gal}(k(a)/k)} \left(\frac{y_i}{y} - \sigma\!\left(\frac{y_i}{y}(\overline{x})\right)\right).
    \]
    In particular, $\mu_a(\frac{y_i}{y})$ has valuation 1 when viewed as an element of $\OO_{X_{\overline{k}}, \overline{x}}$, since $\frac{y_i}{y} - \frac{y_i}{y}(\overline{x})$ is a uniformizer, while the other terms in the product have valuation 0. Thus, $\mu_a(\frac{y_i}{y})$ is a uniformizer at $x$, as desired.

    In our code, we apply Algorithm \ref{alg:uniformizer} to either the canonical model if the curve is non-hyperelliptic or to the hyperelliptic model, as these models are always smooth\footnote{Some special care is needed for the point at infinity in the hyperelliptic case.}. Also note that while Algorithm \ref{alg:uniformizer} is stated for a curve $X \subseteq \P^n$, we do not need to compute equations for the model of $X$ in $\P^n$. Instead, Algorithm \ref{alg:uniformizer} simply requires a map $f: X \to \P^n$ that is birational onto its image and a point $x$ such that $f(X)$ is smooth at $x$. This data can be obtained in the computational setting of Section \ref{sec:notation}; see Algorithm \ref{alg:differential_expansions} for a practical example.
    
	We now turn our attention to the problem of computing the vectors $\mathbf{a}_{x, i, j}$ appearing in the statement of Theorem \ref{thm:rr_matrix}. Conceptually, this procedure is quite simple. After multiplying the function $f_i$ by an appropriate power of the uniformizer $q_x$ so that the resulting function has valuation 0, we evaluate this function at $x$, or equivalently, compute the residue class of the function in $k(x)$. This yields the vector $\mathbf{a}_{x, i, 0}$. We then subtract the corresponding linear combination of the functions $u_{x, l}$, giving a function of positive valuation, and then divide by the uniformizer $q_x$. We can now repeat this procedure to obtain the higher terms in the series expansion.
	
	As far as we are aware, this is the approach used by existing algorithms for computing standard Laurent series expansions such as the \texttt{Completion} intrinsic in Magma. However, this approach can be quite time-consuming for a variety of reasons. For instance, if the uniformizer $q_x$ is a polynomial of high degree, the division operation can be quite slow. To circumvent this, we utilize the procedure described in Algorithm \ref{alg:uniformizer} instead, which tends to yield simpler uniformizers.

	Moreover, if the valuations of the functions $f_i$ at $x$ have large absolute value, the first step of the procedure requires one to compute a large power of the uniformizer. This will inevitably be very cumbersome and lead to many of the same problems as before. As such, instead of multiplying by a power of the uniformizer, we instead divide by a known function of the correct valuation, namely the function $f_i$ with minimal valuation. This explains the necessity of including the function $g_x$ in the statement of Theorem \ref{thm:rr_matrix}.
	
	Finally, when computing the standard Laurent series expansion of the functions $f_i$, one is required to use functions $u_{x, l}$ which form a $k$-basis of the subring $k(x) \subset \widehat{\OO}_{X, x}$. These functions are obtained via Hensel lifting, which can be quite slow, and are described by polynomials of large degree. To alleviate both of these issues, we utilize different functions $u_{x, l}$ whose residue classes, as described in Theorem \ref{thm:rr_matrix}, are only required to form a $k$-basis of the residue class field $k(x)$. These functions are usually much simpler, and their construction does not require Hensel lifting. In fact, these can simply be taken to be the original inputs for the Hensel lifting procedure. In practice, these are obtained using the \texttt{ResidueClassField} and \texttt{Lift} intrinsics in Magma.
	
	Combining these three improvements, we obtain the following algorithm for computing the vectors $\mathbf{a}_{x, i, j}$ appearing in Theorem \ref{thm:rr_matrix}.
	
	\begin{algorithm} \label{alg:function_expansions}
		(Function expansions)
		
		Input: A $k$-basis $\{f_1, \dots, f_{\ell}\}$ of $\LL(D_0)$, a closed point $x \in X$ and $m \geq 1$.
		
		Output: The vectors $\mathbf{a}_{x, i, j} \in k^{\deg(x)}$ associated to the function $f_i$ as defined in Theorem \ref{thm:rr_matrix} for $1 \leq i \leq \ell$ and $0 \leq j < m$, for some functions $g_x$ and $u_{x, l}$ chosen at runtime.
		
		\begin{enumerate}
			\item Compute a set of functions $\{u_{x, 1}, \dots, u_{x, \deg(x)}\} \subseteq k(X)$ whose residue classes form a $k$-basis of $k(x)$ and the associated isomorphism $\varphi : k(x) \to k^{\deg(x)}$ of $k$-vector spaces. \vspace{0.5em}
			\item Compute $m' := \min_{1 \leq i \leq \ell} v_{x}(f_i) - v_{x}(D_0)$.
			\item Set $\mathbf{a}_{x, i, j} = 0$ for $1 \leq i \leq \ell$ and $0 \leq j < m'$.
			\item If $m' \geq m$, return the coefficients $\mathbf{a}_{x, i, j}$ for $0 \leq j < m$ and terminate. \vspace{0.5em}
			\item Find a function $f \in \{f_1, \dots, f_\ell\}$ such that $v_x(f) - v_x(D_0) = m'$. Set $g_x = \frac{1}{f}$.
			\item Set $f_i := f_i g_x$ and $\mathbf{a}_{x, i, m'} := \varphi(f_i(x)) \in k^{\deg(x)}$ for all $1 \leq i \leq \ell$.
			\item If $m = m' + 1$, return the coefficients $\mathbf{a}_{x, i, j}$ and terminate. \vspace{0.5em}
			\item Compute a uniformizing parameter $q_x$ of $x$ using Algorithm \ref{alg:uniformizer}.
			\item For each $m' < j < m$ and $1 \leq i \leq \ell$:
			\begin{enumerate}
				\item Evaluate the dot product $g_i := \mathbf{a}_{x, i, j - 1} \cdot (u_{x, 1}, \dots, u_{x, \deg(x)}) \in k(X)$.
				\item Set $f_i := \frac{f_i - g_i}{q_x}$ and $\mathbf{a}_{x, i, j} := \varphi(f_i(x))$.
			\end{enumerate}
			\item Return the coefficients $\mathbf{a}_{x, i, j}$ and terminate.
		\end{enumerate}
	\end{algorithm}
	
	We note that the computation of the uniformizing parameter $q_x$ is done in step 8, after the first $m'$ vectors are computed. In particular, if $m = 1$, the algorithm will always terminate before or at step 7, and one does not need to compute a uniformizing parameter at all. This leads to a particularly efficient procedure for computing the coefficients $\mathbf{a}_{x, i, 0}$, which will be exploited further in Section \ref{sec:class_groups}.
	
	Another particular case of interest occurs when the base divisor $D_0$ is a canonical divisor of $X$. Let $\{\omega_1, \dots, \omega_g\}$ be a $k$-basis of the space of holomorphic differentials $\Omega^1_X$ on $X$, where $g > 0$ is the genus of $X$. In this case, a basis of the Riemann-Roch space $\LL(K_X)$ of the canonical divisor $K_X = \div(\omega_1)$ is given by $\{\frac{\omega_1}{\omega_1}, \dots, \frac{\omega_g}{\omega_1}\}$. For any closed point $x \in X$, there exists a differential $\omega \in \{\omega_1, \dots, \omega_g\}$ such that $v_x(\omega) = 0$. In particular, $v_x(\frac{\omega}{\omega_1}) = - v_x(\omega_1) = - v_x(K_X)$. Thus, in Algorithm \ref{alg:function_expansions}, we obtain $m' = 0$, and steps 2 through 4 can be omitted. Moreover, the function $g_x$ in step 5 can be taken to be $\frac{\omega_1}{\omega}$, in which case the functions $f_i g_x$ of step 6 simply become the functions $\frac{\omega_1}{\omega}, \dots, \frac{\omega_g}{\omega}$. This gives the following simpler algorithm for computing the series expansions in the case of the canonical divisor.
	
	\begin{algorithm} \label{alg:differential_expansions}
		(Differential expansions)
		
		Input: A $k$-basis $\{\omega_1, \dots, \omega_{g}\}$ of $\Omega^1_X$, a closed point $x \in X$ and $m \geq 1$.
		
		Output: The vectors $\mathbf{a}_{x, i, j} \in k^{\deg(x)}$ associated to the function $\frac{\omega_i}{\omega_1}$ as defined in Theorem \ref{thm:rr_matrix} for $1 \leq i \leq g$ and $0 \leq j < m$, for some functions $g_x$ and $u_{x, l}$ chosen at runtime.
		
		\begin{enumerate}
			\item Compute a set of functions $\{u_{x, 1}, \dots, u_{x, \deg(x)}\} \subseteq k(X)$ whose residue classes form a $k$-basis of $k(x)$ and the associated isomorphism $\varphi : k(x) \to k^{\deg(x)}$ of $k$-vector spaces. \vspace{0.5em}
			\item Find a differential $\omega \in \{\omega_1, \dots, \omega_g\}$ such that $v_x(\omega) = 0$.
			\item Set $f_i := \frac{\omega_i}{\omega}$ and $\mathbf{a}_{x, i, 0} := \varphi(f_i(x)) \in k^{\deg(x)}$ for all $1 \leq i \leq \ell$.
			\item If $m = 1$, return the coefficients $\mathbf{a}_{x, i, 0}$ and terminate. \vspace{0.5em}
			\item Compute a uniformizing parameter $q_x$ of $x$ using Algorithm \ref{alg:uniformizer} with respect to the canonical embedding of $X$ in $\P^{g-1}$, with affine coordinates $\frac{y_i}{y} = f_i$.
			\item For each $0 < j < m$ and $1 \leq i \leq \ell$:
			\begin{enumerate}
				\item Evaluate the dot product $g_i := \mathbf{a}_{x, i, j - 1} \cdot (u_{x, 1}, \dots, u_{x, \deg(x)}) \in k(X)$.
				\item Set $f_i := \frac{f_i - g_i}{q_x}$ and $\mathbf{a}_{x, i, j} := \varphi(f_i(x))$.
			\end{enumerate}
			\item Return the coefficients $\mathbf{a}_{x, i, j}$ and terminate.
		\end{enumerate}
	\end{algorithm}

    Note that step 5 requires the canonical map to be an embedding; this is the case when $X$ is geometrically non-hyperelliptic. If the curve $X$ is hyperelliptic, we instead fall back to the existing \texttt{UniformizingParameter} Magma intrinsic.
	
	\section{Computing gonalities} \label{sec:gonalities}
	
	Equipped with this procedure for computing Riemann-Roch spaces of divisors on $X$, we now turn our attention to the problem of computing the gonality of $X$ over $k$. This problem has received significant attention, particularly in the case when $k$ is an algebraically closed field. For instance, Brill-Noether theory provides strong theoretical bounds on the gonality of $X$, such as the Kleiman-Laksov theorem \cite{kleiman1972existence}. Schicho, Schreyer and Weimann, in \cite{schicho2013computational}, provide an algorithm for computing gonal maps of curves $X$ defined over algebraically closed fields of arbitrary characteristic. This algorithm, based on syzygy computations, is somewhat inefficient in practice, even for curves of moderately sized genus.
    
    For curves defined over finite fields $k$, the state-of-the-art algorithms are somewhat more naive, and rely on the following observation. Let $f \in k(X)$ be a non-constant function of degree at most $d$. By definition, the pole divisor $D$ of $f$ has degree at most $d$, and $f \in \LL(D)$. In particular, enlarging the divisor $D$ if necessary, we have $\ell(D) \geq 2$ for some effective divisor $D$ of degree $d$. Conversely, if this latter condition holds, then there exists a non-constant function $f \in \LL(D)$ whose pole divisor is at most $D$. In particular, $f$ has degree at most $d$.
	
	As the curve $X$ is defined over a finite field $k$, the set of effective divisors on $X$ of degree $d$ is finite and can be enumerated. In particular, we can iterate through all the Riemann-Roch spaces $\LL(D)$ described above, and check whether any have dimension at least 2. This yields a simple procedure for determining whether the curve $X$ has a non-constant function of degree at most $d$.

	Rather than examine all of the effective divisors $D$ of degree $d$ on the curve $X$, it is possible to restrict our search to divisors of a certain shape by exploiting the structure of the curve $X$. Such techniques have previously appeared in the literature, such as in \cite[Proposition 5]{derickx2014gonality} and \cite[Propositions 5.13, 5.14]{najman2024gonality}. In our case, we apply a straightforward procedure based on the following lemma.
    
    \begin{lemma}[{\cite[Lemma 3.1]{najman2024gonality}}] \label{thm:rational_points_trick}
        Suppose that there exists a function $f \in k(X)$ of degree $d$. Then there exists a function $g \in k(X)$ of degree $d$ such that the pole divisor of $g$ is supported on at least $n_1 = \left\lceil \frac{|X(k)|}{|k| + 1} \right\rceil$ points $x \in X(k)$.
    \end{lemma}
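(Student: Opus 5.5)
The idea is to post-compose $f$ with a Möbius transformation $\gamma \in \mathrm{PGL}_2(k)$; such a change does not alter the degree. Regard $f$ as a finite morphism $f : X \to \P^1_k$ of degree $d$.

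Each rational point $x \in X(k)$ maps to a rational point $f(x) \in \P^1(k)$, and $|\P^1(k)| = |k| + 1$. By pigeonhole, some $c \in \P^1(k)$ satisfies
\[
|\{x \in X(k) : f(x) = c\}| \geq \left\lceil \frac{|X(k)|}{|k|+1} \right\rceil = n_1 .
\]

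Choose $\gamma \in \mathrm{PGL}_2(k)$ with $\gamma(c) = \infty$; for example $\gamma(t) = 1/(t - c)$ if $c \neq \infty$, and $\gamma = \mathrm{id}$ otherwise. Set $g = \gamma \circ f$, which is $1/(f-c)$ as an element of $k(X)$ when $c \neq \infty$.

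Since $\gamma$ is an automorphism of $\P^1_k$, the morphism $g$ has the same degree $d$ as $f$. Its pole divisor is $g^*(\infty) = f^*(c)$. The support of this divisor is exactly the fibre $f^{-1}(c)$, which contains at least $n_1$ points of $X(k)$. This is what the lemma asks for.

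The only point needing care is that the pole divisor of $g$ is supported on all of $f^{-1}(c)$ and that its degree equals $d$. Both follow from $g^*(\infty) = f^*(c)$, using $\deg f^*(c) = \deg f$ for a finite morphism of curves. Equivalently, in function-field language, $v_x(1/(f-c)) = -v_x(f-c) < 0$ exactly when $f(x) = c$. I expect no real obstacle: the argument is just a pigeonhole count followed by a change of coordinates on $\P^1$.
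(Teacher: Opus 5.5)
Your proof is correct and matches the standard argument for this lemma. The paper gives no proof of its own; it only cites Najman and Orli\'c, whose argument is the same as yours: pigeonhole $X(k)$ into the $|k|+1$ points of $\P^1(k)$, then compose $f$ with a M\"obius transformation sending the heavily hit point $c$ to $\infty$, so that $g$ keeps degree $d$ and its pole divisor is $f^*(c)$.
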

    
    As a result, rather than enumerate all effective divisors $D \in \Div X$ of degree $d$, we can restrict to considering all such effective divisors supported on at least $n_1$ points $x \in X(k)$. Combining this with the procedure described above gives the following brute-force algorithm to determine whether the curve $X$ has a non-constant function of degree at most $d$, variations of which already occur in \cite{derickx2014gonality} and \cite{najman2024gonality}.
	
	\begin{algorithm} \label{alg:has_function_original}
		(Has function of degree $\leq d$)
		
		Input: A curve $X$ of genus $g$ over a finite field $k$ and $d \geq 1$.

		Output: Whether the curve $X$ has a function $X \to \P^1_k$ of degree at most $d$.
		
		\begin{enumerate}
            \item Set $n_1 = \left\lceil \frac{|X(k)|}{|k| + 1} \right\rceil$.
			\item Compute all closed points of $X$ of degree $\leq \max(1, d - n_1)$.
			\item For all effective divisors $D \in \Div X$ of degree $d$ and supported on at least $n_1$ points of $X(k)$:
			\begin{enumerate}
				\item Compute $\ell(D)$ using one of the already existing algorithms, for example those given in \cite{hess2002riemannroch}.
				\item If $\ell(D) \geq 2$, return true and terminate.
			\end{enumerate}
			\item Return false and terminate.
		\end{enumerate}
	\end{algorithm}
	
	This algorithm can easily be extended to compute the gonality of the curve $X$, by successively checking each degree $d$ in increasing order until we find the minimum degree $d$ such that the curve $X$ has a function of degree $d$. The gonality of the curve $X$ is then equal to this degree $d$.
	
	This algorithm, with a few modifications, was for instance used by the first author and van Hoeij in \cite{derickx2014gonality} and Najman and Orli\'c in \cite{najman2024gonality} to compute the $\F_p$-gonalities of a number of modular curves $X_1(N)$ and $X_0(N)$, respectively. We note that this algorithm has exponential complexity, as the number of effective divisors of degree $d$ on the curve $X$ grows as $q^d$, where $q$ is the order of the field $k$. This makes this algorithm infeasible for large $q$ or moderately large degrees $d$.
	
	The key bottleneck in this algorithm is the computation of the Riemann-Roch dimension $\ell(D)$ for many different divisors $D$. As such, we want to replace this step with the linear algebraic procedure offered by Theorem \ref{thm:rr_matrix}. However, we cannot apply this result directly, as the divisors for which the Riemann-Roch dimension need to be computed do not have the right shape. Instead, we make use of the Riemann-Roch theorem, which states that, given a divisor $D \in \Div X$, we have
	\[
		\ell(D) - \ell(K_X - D) = \deg(D) - g + 1,
	\]
	where $K_X$ is a canonical divisor on $X$. In particular, we find that $\ell(D) \geq 2$ if and only if $\ell(K_X - D) \geq g + 1 - \deg(D)$. This latter condition can be rewritten in the form
	$
		\deg(D) > \ell(K_X) - \ell(K_X - D).
	$
	The right hand side can be computed as the rank of a matrix $A$ as given in Theorem \ref{thm:rr_matrix}, leading to the following algorithm for computing whether the curve $X$ has a function of degree at most $d$.
	
	\begin{algorithm} \label{alg:has_function_new}
		(Has function of degree $\leq d$)
		
		Input: A curve $X$ of genus $g$ over a finite field $k$ and $d \geq 1$.
		
		Output: Whether the curve $X$ has a function $X \to \P^1_k$ of degree at most $d$.
		
		\begin{enumerate}
            \item Set $n_1 = \left\lceil \frac{|X(k)|}{|k| + 1} \right\rceil$.
            \item If $n_1 > d$ return false and terminate.
			\item Compute all closed points of $X$ of degree $\leq \max(1,d - n_1)$.
			\item Compute a $k$-basis $\{\omega_1, \dots, \omega_g\}$ of $\Omega^1_X$.
			\item For all closed points $x \in X$ with $\deg(x) \leq \max(1,d - n_1)$, compute the vectors $\mathbf{a}_{x, i, j}$ associated to the function $\frac{\omega_i}{\omega_1}$ as defined in Theorem \ref{thm:rr_matrix} for $1 \leq i \leq g$ and $0 \leq j < \left\lfloor \frac{d - n_1}{\deg(x)} \right\rfloor$ using Algorithm \ref{alg:differential_expansions}.
			\item For all effective divisors $D \in \Div X$ of degree $d$ and supported on at least $n_1$ points of $X(k)$:
			\begin{enumerate}
				\item Compute the rank of the matrix $A$ appearing in Theorem \ref{thm:rr_matrix}.
				\item If $\rank(A) < d$, return true and terminate.
			\end{enumerate}
			\item Return false and terminate.
		\end{enumerate}
	\end{algorithm}
    
	As before, this algorithm has exponential complexity, since we still iterate through most of the effective divisors $D$ of degree $d$ on the curve $X$. However, the computation of the Riemann-Roch dimension is now replaced by a single rank computation, which is much faster. This leads to a significantly faster algorithm, making it feasible to compute the gonalities of higher genera curves over larger base fields.
	
	While this section has focused on improvements to Algorithm \ref{alg:has_function_original}, there exist other approaches to computing the gonality of the curve $X$ that combine the enumeration of a large set of effective divisors of $X$ with other techniques to obtain further speedups. For example, in \cite[Section 3.2]{derickx2014gonality}, van Hoeij and the first named author determine that the modular curve $X_1(37) / \F_2$ has gonality at least 18, in part by computing the class group of this curve. As it was not clear to us how to implement these techniques in a fully generic and automated fashion, we have omitted them from our algorithm. Although such techniques may not benefit from the improved algorithm described above, an improvement in our ability to compute class groups would allow this technique to be applied further. This leads us nicely to the object of the next section.
	
	\section{Computing class groups} \label{sec:class_groups}
	
	In this section, we turn our attention to the problem of computing the divisor class group of the curve $X$. Our approach is based on the index calculus-type method of Hess presented in \cite{hess2003classgroups, hess1999phdthesis}, which we briefly summarize here. We set the following notation. Given a set $S$ of closed points of $X$, we say that a divisor $D \in \Div X$ is $S$-smooth if $D$ is supported on the points of $S$. The set of $S$-smooth divisors on $X$ is denoted $\Div_S X$, and forms a subgroup of $\Div X$ naturally isomorphic to the free abelian group $\Z^{|S|}$.
	
	The method of Hess proceeds in three main steps:
	\begin{description}
		\item[Factor basis construction] Determine a set of closed points $S \subset X$, called a factor basis, whose divisor classes generate the divisor class group of $X$. In particular, the map $\Z^{|S|} \cong \Div_S X \to \Cl X$ is surjective.
		\item[Relation collection] Denote by $\Lambda_S$ the kernel of the map $\Z^{|S|} \to \Cl X$. Compute a generating set $R$ of the lattice of relations $\Lambda_S$.
		\item[Group structure computation] Compute the quotient group $\Z^{|S|} / \Lambda_S$, as well as the isomorphism $\Cl X \cong \Z^{|S|} / \Lambda_S$.
	\end{description}
	
	In order to construct a factor basis, Hess provides an explicitly computable bound $m$ such that the set
	\[
		S = \{x \in X : \deg(x) \leq m\} \cup \Supp(D_1)
	\]
	generates the divisor class group $\Cl X$, where $D_1$ is any fixed divisor of degree 1 on $X$. This bound $m$ can be obtained using the \texttt{ClassGroupGenerationBound} intrinsic in Magma, while such a divisor $D_1$ can also be computed using an algorithm of Hess, implemented in Magma as the \texttt{DivisorOfDegreeOne} intrinsic. This procedure for constructing a factor basis is very efficient, and we proceed identically.
	
	The third step is also mostly straightforward. Given a generating set $R$ of the lattice of relations $\Lambda_S$, the quotient group $\Z^{|S|} / \Lambda_S$ can be computed from the Smith normal form of the matrix $B$ whose rows are the vectors of $R$. Indeed, we have that
	\[
		\Z^{|S|} / \Lambda_S \cong \frac{\Z}{n_1 \Z} \times \dots \times \frac{\Z}{n_{|S|} \Z},
	\]
	where $n_1, \dots, n_{|S|}$ are the diagonal entries of the Smith normal form of $B$, also known as the elementary divisors of $B$. In particular, the rank of $\Z^{|S|} / \Lambda_S$ is equal to $|S| - r$, where $r$ is the rank of $B$. Similarly, the isomorphism described above can be explicitly determined from the transformation matrices between $B$ and its Smith normal form. This allows us to compute the isomorphism $\Cl X \cong \Z^{|S|} / \Lambda_S$.
	
	As will be seen below, the matrix $B$ is quite sparse. As such, computing the elementary divisors of $B$ can be done reasonably efficiently using sparse techniques. The transformation matrices between $B$ and its Smith normal form however are always dense, and cannot be computed as efficiently. In practice, we have found that using the \texttt{AbelianGroup} functionality of Magma is quicker than computing the transformation matrices directly. However, this computation still scales much worse than computing the Smith normal form itself, and can dominate the running time in large examples. Work is ongoing on methods to circumvent this issue, however these lie outside the scope of the present article. Thus, in Section \ref{sec:timings}, we will omit the computation of the isomorphism $\Cl X \cong \Z^{|S|} / \Lambda_S$ when comparing methods, as any improvement to this procedure can be applied equally to both methods.
	
	The main difference between our method and the method of Hess lies in the relation collection stage. In the approach used by Hess, a generating set of the lattice of relations $\Lambda_S$ is computed as follows. Firstly, generate a random $S$-smooth (nearly) effective divisor $D \in \Div X$ of small degree, usually taken to be about the genus of $X$. Compute the Riemann-Roch space $\LL(D)$. For each function $f \in \LL(D)$, check whether the divisor $\div(f)$ is $S$-smooth. If so, $\div(f)$ is an element of the relation lattice $\Lambda_S$, and add the corresponding vector to the set $R$. This procedure is now repeated until $R$ forms a generating set of $\Lambda_S$.
	
	In order to check whether $R$ forms a generating set of $\Lambda_S$, Hess makes use of the known structure of the class group $\Cl X$. For instance, the class group $\Cl X$ is an abelian group of rank 1. Moreover, Hess provides an effective procedure for computing an approximation of the class number of $X$ with arbitrarily small multiplicative error, implemented in Magma in the \texttt{ClassNumberApproximation} intrinsic. In particular, one can compute an approximation $\tilde{h}$ of the class number with multiplicative error less than $\sqrt{2}$. From these facts, it follows that $R$ is a generating set of $\Lambda_S$ if and only if the quotient group $\Z^{|S|} / \langle R \rangle$ has rank 1 and the order of its torsion subgroup is smaller than $\sqrt{2} \tilde{h}$. Both the rank and the order of the torsion subgroup of this quotient group can be efficiently determined by computing the Smith normal form of the matrix $B$ as described above. This gives a practical procedure for checking whether the set $R$ generates the lattice $\Lambda_S$.
	
	The main bottleneck in this approach comes from the fact that one needs to compute the Riemann-Roch spaces $\LL(D)$ for a large number of $S$-smooth divisors $D \in \Div X$. Thus, just as was done in Section \ref{sec:gonalities}, our approach consists of replacing these Riemann-Roch computations with the linear algebraic alternative provided by Theorem \ref{thm:rr_matrix}. To do so, we generate the divisors $D$ in a different fashion. Instead of generating random small $S$-smooth effective divisors $D$, we first fix a large effective divisor $D_0 \in \Div X$, whose support we add to the factor basis $S$. We then compute the Riemann-Roch spaces $\LL(D_0 - D)$, where $D$ is a random $S$-smooth effective divisor such that the degree of $D_0 - D$ is small. This can be done efficiently using Theorem \ref{thm:rr_matrix} once the vectors $\mathbf{a}_{x, i, j}$ for a basis of $\LL(D_0)$ have been precomputed.
	
	Before describing the algorithm in detail, we first detail an additional improvement to the procedure which checks whether the divisor $\div(f)$ of a function $f \in k(X)$ is $S$-smooth. While this can be done by writing the divisor $\div(f)$ as a linear combination of closed points of $X$, for instance using the \texttt{Decomposition} intrinsic in Magma, this requires ideal factorization and is much too slow to be practical. Instead, we build on a method suggested by Hess in \cite{hess2003classgroups}, which relies on the following lemma.
	
	\begin{lemma}
		Consider a function $f \in k(X)$. Let $g \in k[t]$ be the denominator of $f$ with respect to the order $\OO_0$, that is, the minimal function $g \in k[t]$ such that $fg \in \OO_0$. Let $h \in k[t]$ be a generator of the ideal $k[t] \cap f g \OO_0$, and let $D_g$ and $D_h$ be the zero divisors of $g$ and $h$ respectively. Then
		\[
			\{\pi(x) : x \in \Supp(\div(f)), \pi(x) \neq \infty\} = \Supp(D_g) \cup \Supp(D_h).
		\]
		In particular, $\div(f)$ is supported on the set
		\[
			S = \{x \in X : \pi(x) \in \Supp(D_g) \cup \Supp(D_h) \cup \{\infty\}\}.
		\]
	\end{lemma}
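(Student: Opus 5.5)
The plan is to work locally over each finite place $p \in \P^1_k \setminus \{\infty\}$, i.e.\ each monic irreducible $p \in k[t]$, and prove
\[
p \in \Supp(D_g) \cup \Supp(D_h) \iff \exists\, x \in X \text{ with } \pi(x) = p,\ v_x(f) \neq 0 .
\]
The closed points $x$ with $\pi(x) \neq \infty$ are exactly the maximal ideals of the Dedekind domain $\OO_0$. For such $x$ with $\pi(x) = p$, the valuation $v_x$ restricts on $k(t)$ to $e_x v_p$, where $e_x \geq 1$ is the ramification index. The second assertion of the lemma is immediate from the first, since points above $\infty$ are included by hand. The whole argument uses only the following valuative descriptions of $g$ and $h$.

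\emph{Step 1 (the denominator $g$).} Since $\OO_0 = \bigcap_{\pi(x)\neq\infty} \OO_{X,x} \cap k(X)$, we have $fg \in \OO_0$ iff $e_x v_p(g) \geq -v_x(f)$ for every finite $x$ above each $p$. Minimality of $g$ (up to units) then gives
\[
v_p(g) = \max\Bigl(0,\ \max_{x \mapsto p} \bigl\lceil -v_x(f)/e_x \bigr\rceil\Bigr).
\]
Hence $v_p(g) > 0$ iff $f$ has a pole at some $x$ above $p$.

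\emph{Step 2 (the norm-like element $h$).} $k[t] \cap fg\OO_0$ is a nonzero ideal of $k[t]$ (it contains the norm of $fg$), so a generator $h$ exists. For $c \in k[t]$, we have $c \in fg\OO_0$ iff $e_x v_p(c) \geq v_x(fg)$ for all finite $x$. So
\[
v_p(h) = \max_{x \mapsto p} \bigl\lceil v_x(fg)/e_x \bigr\rceil ,
\]
and this is positive iff $fg$ vanishes at some $x$ above $p$.

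\emph{Step 3 (combining).} If $v_p(g) = 0$, then $v_x(fg) = v_x(f)$ for all $x$ above $p$, and $f$ has no poles there. So $p \in \Supp(D_h)$ iff $f$ has a zero above $p$, which gives the equivalence in this case. If $v_p(g) > 0$, then $p \in \Supp(D_g)$ and $f$ has a pole above $p$, so both sides hold.

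The main obstacle is the converse direction in the mixed case: a zero of $f$ above $p$ must be detected even when $g$ also contributes at $p$. This is handled because $g$ already puts $p$ in the union, so the union covers it either way. It is also worth noting that $v_p(h)$ may detect zeros of $fg$ created by multiplying by $g$ at points where $f$ is a unit. This would only add $p$ when $v_p(g) > 0$, which is already accounted for, so no spurious places appear.

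The remaining care point is justifying the valuative characterization of $\OO_0$ and of membership in the principal ideal $fg\OO_0$. Both follow from $\OO_0$ being Dedekind with localizations $\OO_{X,x}$, as recalled in Section~\ref{sec:notation}.
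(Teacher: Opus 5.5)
Your proof is correct, but it is organized differently from the paper's. You derive closed-form expressions
$v_p(g)=\max\bigl(0,\max_{x\mapsto p}\lceil -v_x(f)/e_x\rceil\bigr)$ and $v_p(h)=\max_{x\mapsto p}\lceil v_x(fg)/e_x\rceil$ from the description of $\OO_0$ as the intersection of the local rings at the finite points. Both inclusions then follow from a single case split on whether $v_p(g)>0$.

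The paper never computes these valuations. It proves $\subseteq$ directly:
\begin{itemize}
\item a pole at $x$ forces $v_x(g)>0$, because $fg\in\OO_0$;
\item a zero at $x$ forces $v_x(h)\geq v_x(f)>0$, by writing $h=fga$ with $a\in\OO_0$.
\end{itemize}
It proves $\supseteq$ by contradiction with minimality. Take the irreducible polynomial $b$ with $\div(b)=y-\deg(y)\cdot\infty$ and divide $g$ (resp.\ $h$) by $b$. Minimality gives $fg/b\notin\OO_0$ (resp.\ $a/b\notin\OO_0$), which must produce a pole at some $x$ above $y$.

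Both arguments rest on the same fact: membership in $\OO_0$ is tested by valuations at the finite points. Your version makes this explicit and yields the exact multiplicities of $g$ and $h$. It also shows transparently why multiplying by $g$ can only create extra zeros of $fg$ above places already in $\Supp(D_g)$. The paper's version avoids ramification indices and ceilings, and uses the valuative criterion only in the weak form ``an element outside $\OO_0$ has a pole at some finite point.''
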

	
	\begin{proof}
		Consider a closed point $x \in \Supp(\div(f))$ such that $\pi(x) \neq \infty$. Suppose first that $v_x(f) < 0$. Since $f g \in \OO_0$, it follows that $v_x(f) + v_x(g) \geq 0$, where the function $g \in k[t]$ is viewed as an element of $k(X)$. In particular, we have that
		\[
			v_x(g) \geq -v_x(f) > 0.
		\]
		Thus, viewing $g$ as an element of $k[t]$, we have that $v_{\pi(x)}(g) > 0$, and so $\pi(x) \in \Supp(D_g)$.
		
		Suppose now that $v_x(f) > 0$. Since $h \in k[t]$ is an element of the ideal $f g \OO_0$, there exists a function $a \in \OO_0$ such that $h = f g a$. As $g \in k[t] \subset \OO_0$ and $a \in \OO_0$, we obtain that
		\[
			v_x(h) = v_x(f) + v_x(g) + v_x(a) \geq v_x(f) > 0.
		\]
		Viewing $h$ as an element of $k[t]$, we therefore have that $v_{\pi(x)}(h) > 0$, and so $\pi(x) \in \Supp(D_h)$. Thus, we have shown that
		\[
			\{\pi(x) : x \in \Supp(\div(f)), \pi(x) \neq \infty\} \subset \Supp(D_g) \cup \Supp(D_h).
		\]
		
		Consider a closed point $y \in \Supp(D_g) \cup \Supp(D_h)$. Note that by definition, we have $y \neq \infty$. Moreover, there exists a function $b \in k(t)$ such that $\div(b) = y - \deg(y) \cdot \infty$. Suppose first that $y \in \Supp(D_g)$. By construction we have $v_y(g) > 0$, and so the quotient $\frac{g}{b}$ is an element of $k[t]$. The function $g \in k[t]$ is the denominator of $f$, and so we have $f g \in \OO_0$ and $f \frac{g}{b} \notin \OO_0$. As the divisor of $b$ is supported only on the closed points $y$ and $\infty$, it follows that there exists $x \in X$ with $\pi(x) = y$ such that $v_x(f \frac{g}{b}) < 0$. In particular, since $\frac{g}{b} \in k[t]$, we have
		\[
			v_x(f) \leq v_x \! \left(f \frac{g}{b}\right) < 0.
		\]
		Thus, $x \in \Supp(\div(f))$ and $y \in \{\pi(x) : x \in \Supp(\div(f)), \pi(x) \neq \infty\}$.
		
		Suppose now that $y \in \Supp(D_h) \setminus \Supp(D_g)$. Since $v_y(h) > 0$, the quotient $\frac{h}{b}$ is an element of $k[t]$. Since $h$ is a generator of the ideal $k[t] \cap f g \OO_0$, it follows that $h \in f g \OO_0$ and $\frac{h}{b} \notin f g \OO_0$. As before, write $h = f g a$ for some $a \in \OO_0$. Thus, we obtain that $\frac{a}{b} \notin \OO_0$. Since the divisor of $b$ is supported only on the closed points $y$ and $\infty$, it follows that there exists $x \in X$ with $\pi(x) = y$ such that $v_x(\frac{a}{b}) < 0$. As the quotient $\frac{h}{b}$ is an element of $k[t]$, we have $v_x(\frac{h}{b}) \geq 0$. Moreover, since $y \notin \Supp(D_g)$ and $g \in k[t]$, it follows that $v_y(g) = 0$. Thus, we have
		\[
			v_x(f) = v_x(f g) > v_x \! \left(f g \frac{a}{b}\right) = v_x \! \left(\frac{h}{b}\right) \geq 0.
		\]
		Thus, $x \in \Supp(\div(f))$ and $y \in \{\pi(x) : x \in \Supp(\div(f)), \pi(x) \neq \infty\}$. The statement of the lemma now follows.
	\end{proof}
	
	Using this result, we can give the following algorithm for computing whether the divisor $\div(f)$ of a function $f \in k(X)$ is $S$-smooth.
	
	\begin{algorithm} \label{alg:smoothness_check}
		(Smoothness check)
		
		Input: A factor basis $S = \{x_1, \dots, x_s\} \subset X$ and a function $f \in k(X)$ whose pole divisor is supported on $S$.
		
		Output: Whether the divisor of the function $f$ is supported on $S$, and if so, the vector in $\Z^s$ corresponding to $\div(f)$.
		
		\begin{enumerate}
			\item Compute the denominator $g \in k[t]$ of $f$ with respect to the order $\OO_0$.
			\item Compute a generator $h \in k[t]$ of the ideal $k[t] \cap f g \OO_0$.
			\item Compute the zero divisors $D_g$ and $D_h$ of $g$ and $h$ respectively.
			\item If $\Supp(D_h)$ contains a closed point $y \in \P^1$ such that no closed point of $S$ lies above $y$, return false and terminate. \vspace{0.5em}
			\item Set $d = 0$ and $\mathbf{v} = \mathbf{0} \in \Z^s$.
			\item For each closed point $x_i \in S$ such that $\pi(x_i) \in \Supp(D_g) \cup \Supp(D_h) \cup \{\infty\}$:
			\begin{enumerate}
				\item Compute the valuation $v_{x_i}(f)$.
				\item Set $d = d + v_{x_i}(f)$ and $\mathbf{v}_{i} = v_{x_i}(f)$.
			\end{enumerate}
			\item If $d = 0$, return true and $\mathbf{v}$. Otherwise, return false. Terminate.
		\end{enumerate}
	\end{algorithm}
	
	The denominator $g$ and generator $h$ can be readily computed in Magma using the \texttt{Minimum} intrinsic. Moreover, computing the zero divisors $D_g$ and $D_h$ is equivalent to factoring the polynomials $g$ and $h$ in $k[t]$. Very efficient polynomial-time algorithms for this problem are known and implemented in Magma.
	
	In order to carry out steps 4 and 6 efficiently, we precompute, for each closed point $x$ of the factor basis $S$, the closed point $y = \pi(x) \in \P^1$ over which $x$ lies. This is stored in a hash table, whose keys are the closed points of $\P^1$ and whose values are the list of closed points of $S$ lying above the corresponding closed point of $\P^1$. This allows one to retrieve the set of closed points of $S$ lying above a given closed point of $\P^1$ without needing to iterate through the entirety of $S$, making steps 4 and 6 much faster. This is especially important as this smoothness check is executed many times throughout the course of the relation collection stage.
	
	Using this algorithm for checking the smoothness of divisors of functions, we now describe the algorithm for collecting relations in detail.
	
	\begin{algorithm} \label{alg:relation_collection}
		(Relation collection)
		
		Input: A factor basis $S \subset X$, an $S$-smooth divisor $D_0$ and an approximation $\tilde{h}$ of the class number of $X$ with multiplicative error less than $\sqrt{2}$.
		
		Output: A generating set $R \subset \Z^s$ of the lattice of relations $\Lambda_S$.
		
		\begin{enumerate}
			\item Compute a $k$-basis $\{f_1, \dots, f_\ell\}$ of $\LL(D_0)$ using the algorithm of \cite{hess2002riemannroch}, where $\ell = \ell(D_0)$.
			\item For all closed points $x \in S$, compute the vectors $\mathbf{a}_{x, i, 0}$ associated to the function $f_i$ as defined in Theorem \ref{thm:rr_matrix} for $1 \leq i \leq \ell$ using Algorithm \ref{alg:function_expansions}.
			\item Set $R = \emptyset \subset \Z^s$. \vspace{0.5em}
			\item Select a random effective divisor $D$ consisting of a sum of distinct closed points of $S$, with $\deg(D) < \ell$ and maximal. If there are any closed points $x \in S$ such that no relation of $R$ is supported on $x$, ensure that at least one such point $x$ is in the support of $D$.
			\item Compute the subspace $\LL(D_0 - D) \subset \LL(D_0)$ using Theorem \ref{thm:rr_matrix}.
			\item For each function $f \in \P(\LL(D_0 - D))$:
			\begin{enumerate}
				\item Check whether the divisor of $f$ is supported on $S$ using Algorithm \ref{alg:smoothness_check}.
				\item If so, add the corresponding vector in $\Z^s$ to the set $R$ and go to step 8.
			\end{enumerate}
			\item If no such $S$-smooth function $f$ was found, go to step 4.
			\item Compute the rank of the matrix $B$ whose rows are the vectors of $R$. If $\rank(B) \neq s - 1$, go to step 4.
			\item Let $o$ be the product of the non-zero elementary divisors of the matrix $B$. If $\sqrt{2} \tilde{h} \leq o$, go to step 4. Otherwise, return $R$ and terminate.
		\end{enumerate}
	\end{algorithm}
	
	In practice, we do not execute steps 8 and 9 every time we find a relation, as computing the rank and elementary divisors of the matrix $B$ would then dominate the running time. Instead, we proceed in batches, collecting many relations of $R$ before executing steps 8 and 9.
	
	The matrix $B$ is very sparse, as each relation of $R$ is supported on at most $2 \deg(D_0)$ closed points. Therefore, we both store the vectors of $R$ in a sparse format, as well as utilize sparse methods for computing the rank and elementary divisors of the matrix $B$. This improves both the performance and memory usage of the algorithm.
	
	In step 4, we ensure that the divisor chosen includes a closed point $x \in S$ such that no relation of $R$ is supported on $x$, if such a closed point $x$ exists. This serves to increase the likelihood of the set of relations $R$ generating the lattice $\Lambda_S$, as each closed point of $S$ is guaranteed to belong to some relation of $R$.
	
	In the same step, we also look for effective divisors $D$ consisting of a sum of distinct closed points of $S$. This condition means that we only need to precompute the leading coefficient of the Laurent series expansion of the functions $f_i$ at each closed point of $S$, greatly speeding up the precomputation as explained after Algorithm \ref{alg:function_expansions}. However, this change does have theoretical implications for the algorithm, as it may no longer be possible to generate all of the relations of $\Lambda_S$ in this manner. In practice, however, this does not seem to be a concern.
	
	The choice of the base divisor $D_0$ is not directly specified, and has a great influence on the algorithm. For instance, if the base divisor is too small, it may be impossible to find a generating set of $\Lambda_S$ by looking at functions in the Riemann-Roch space $\LL(D_0)$, and the algorithm will then not terminate. In practice, we use the smallest multiple of the infinity divisor of degree at least $2 g(X) + d$, where $d$ is the maximal degree of a closed point of $S$. This choice stems from two main reasons. Firstly, using a multiple of the infinity divisor allows the Riemann-Roch space $\LL(D_0)$ to be computed very efficiently, by computing a \texttt{ShortBasis} of the zero divisor. Secondly, the degree choice seems to be a very ``safe'' choice, where the algorithm seems to terminate in most cases. However, this choice of divisor is not always optimal, and in many cases is larger than needed. Executing the algorithm with a smaller choice of divisor may then lead to a large decrease in the running time. To compensate for this, we allow the user to optionally specify a base divisor $D_0$, which will be used instead of the default choice explained above.
	
	\section{Timings} \label{sec:timings}
    
    In this section, we compare our methods with existing approaches to computing gonalities and class groups of curves over finite fields. To do so, we benchmark the various approaches on three different sets of curves, as follows:
    \begin{enumerate}
        \item Random curves of genus $3 \leq g \leq 10$, given as plane nodal curves, as generated by \texttt{RandomCurveByGenus}.
        \item Random curves of genus $11 \leq g \leq 13$, given as intersections of cubics in $\P^3$, as generated by \texttt{RandomCurveByGenus}.
        \item The reductions of the modular curves $X_0(N)$ modulo various primes of good reduction, for various $1 \leq N \leq 250$, given as canonical images by \texttt{ModularCurveQuotient}.
    \end{enumerate}
    These sets of curves were chosen to cover a wide variety of possible situations in which class group and gonality computations might be applied. This diverse choice allows us to illustrate how the efficiency of the various algorithms varies with the properties of the underlying curve.

    \subsection{Gonalities} \label{sec:gonality_timings}

    In the case of gonalities, we compare our algorithm, as described in Algorithm \ref{alg:has_function_new} and implemented in our \texttt{CurveArith} package, to the algorithm described in Algorithm \ref{alg:has_function_original}. The latter was used for instance by Najman and Orli\'c in \cite{najman2024gonality} to compute the gonalities of the modular curves $X_0(N)$.

    Our methodology is as follows. For the modular curves $X_0(N)$, we simply redo the work done in \cite{najman2024gonality} and compute lower bounds for the gonality of these curves over various finite fields. In particular, we apply the procedures described in Algorithms \ref{alg:has_function_original} and \ref{alg:has_function_new} with values of $N$, $q$ and $d$ as given in \cite[Proposition 5.15]{najman2024gonality} to reproduce their gonality lower bounds for $X_0(N)_{\F_q}$. Our results are given in Table \ref{tab:gonality_x0}.

    \begin{table}
        \centering
        \caption{Timing data for Algorithms \ref{alg:has_function_original} and \ref{alg:has_function_new} on the modular curves $X_0(N) / \F_q$ appearing in \cite[Proposition 5.15]{najman2024gonality}. The modular curve $X_0(277)$ has been omitted as we were unable to compute a model for it. All durations are in seconds.}
        \label{tab:gonality_x0}
        \begin{tabular}{N{3}N{1}N{4}N{5}|N{3}N{1}N{4}N{6}|N{3}N{1}N{5}N{7}}
        \toprule
        {$N$} & {$q$} & {\ref{alg:has_function_new}} & {\ref{alg:has_function_original}} & {$N$} & {$q$} & {\ref{alg:has_function_new}} & {\ref{alg:has_function_original}} & {$N$} & {$q$} & {\ref{alg:has_function_new}} & {\ref{alg:has_function_original}} \\ \midrule
        38 & 5 & {$<$}~ 1 & {$<$}~ 1 & 127 & 3 & 5 & 42 & 172 & 3 & 63 & 20060 \\
        44 & 5 & {$<$}~ 1 & {$<$}~ 1 & 128 & 3 & 1 & 7 & 175 & 2 & 7 & 31 \\
        53 & 7 & {$<$}~ 1 & {$<$}~ 1 & 130 & 3 & 172 & 19150 & 176 & 3 & 35 & 1812 \\
        61 & 3 & {$<$}~ 1 & {$<$}~ 1 & 132 & 5 & 282 & 216500 & 178 & 3 & 268 & 15600 \\
        76 & 5 & 8 & 75 & 134 & 3 & 28 & 1261 & 179 & 5 & 39 & 273 \\
        82 & 5 & 5 & 139 & 136 & 5 & 72 & 18330 & 180 & 7 & 210 & 1403000 \\
        84 & 5 & 8 & 350 & 137 & 3 & 2 & 8 & 181 & 3 & 5 & 103 \\
        86 & 3 & 2 & 21 & 140 & 3 & 142 & 108200 & 187 & 2 & 68 & 3816 \\
        93 & 5 & 6 & 74 & 144 & 5 & 2 & 110 & 189 & 2 & 68 & 161 \\
        99 & 5 & 5 & 42 & 147 & 5 & 23 & 344 & 192 & 5 & 135 & 292100 \\
        102 & 5 & 329 & 9906 & 148 & 5 & 159 & 87440 & 193 & 3 & 40 & 520 \\
        106 & 7 & 1110 & 70720 & 150 & 7 & 1276 & 145300 & 196 & 5 & 1056 & 71150 \\
        108 & 5 & 3 & 39 & 151 & 5 & 20 & 83 & 197 & 3 & 63 & 258 \\
        109 & 3 & {$<$}~ 1 & {$<$}~ 1 & 152 & 3 & 35 & 1567 & 198 & 5 & 22380 & 641500 \\
        112 & 3 & 2 & 14 & 153 & 5 & 210 & 1656 & 200 & 3 & 37 & 10070 \\
        113 & 3 & 1 & 6 & 154 & 5 & 3558 & 828000 & 201 & 2 & 188 & 12360 \\
        114 & 5 & 540 & 23900 & 157 & 3 & 19 & 568 & 217 & 2 & 152 & 249 \\
        115 & 3 & 5 & 37 & 160 & 7 & 1629 & 30340 & 229 & 3 & 84 & 2028 \\
        116 & 3 & 3 & 10 & 162 & 5 & 21 & 185 & 233 & 2 & 87 & 5900 \\
        117 & 5 & 4 & 19 & 163 & 5 & 149 & 3057 & 241 & 2 & 50 & 401 \\
        118 & 3 & 3 & 11 & 169 & 5 & 7 & 52 & 247 & 2 & 294 & 2996 \\
        122 & 3 & 5 & 14 & 170 & 3 & 1718 & 419600 &  &  &  &  \\ \bottomrule
        \end{tabular}
    \end{table}

    In the case of the random curves of genus $3 \leq g \leq 13$, an additional consideration was the fact that the runtime of both algorithms depends strongly on the gonality of the curve, as we terminate immediately upon finding a function of minimal degree. This variation would make it difficult to compare the results between different runs.

    Instead, we apply Algorithms \ref{alg:has_function_original} and \ref{alg:has_function_new} with a fixed degree $d = \lfloor \frac{g + 3}{2} \rfloor$, where $g$ is the genus of the random curve. Moreover, if a non-constant function is found in steps 3b and 6b of Algorithms \ref{alg:has_function_original} and \ref{alg:has_function_new} respectively, we do not terminate, and continue iterating through all effective divisors of degree $d$. In effect, we simply compute the Riemann-Roch dimensions of all effective divisors of degree $\lfloor \frac{g + 3}{2} \rfloor$. The value $\lfloor \frac{g + 3}{2} \rfloor$ was chosen as it is the gonality of a generic genus $g$ curve over an algebraically closed field \cite[Proposition A.1.v]{poonen2007gonality}, and as such, is most representative of the degrees encountered in practice. This procedure allows us to obtain an idea of the time it might take to compute the gonality of the curve, while being independent of the actual gonality of the curve.
    
    For each genus $g$ and base field $\F_q$, ten random curves were generated using the \texttt{RandomCurveByGenus} intrinsic. For each curve, the above procedure was then carried through using both Algorithms \ref{alg:has_function_original} and \ref{alg:has_function_new}. In the case of Algorithm \ref{alg:has_function_original}, in many cases it would have been prohibitively expensive to carry out this procedure to completion. In such cases, the procedure was carried out up to a fixed time bound, from which the total time was extrapolated. The results for the random curves of genus $3 \leq g \leq 10$ are given in Table \ref{tab:gonality_genus_3-10}, while the results for the random curves of genus $11 \leq g \leq 13$ are given in Table \ref{tab:gonality_genus_11-13}.

    \begin{table}
        \centering
        \caption{Timing data for Algorithms \ref{alg:has_function_original} and \ref{alg:has_function_new} on random curves of genus $3 \leq g \leq 10$ over various finite fields $\F_q$. The data is split based on the value of $n_1$ as defined in both algorithms, due to material differences in these two cases. The timing data for curves of odd genus $g$ is very similar to that of genus $g + 1$ and, in the interest of brevity, has been omitted. All durations are in seconds.}
        \label{tab:gonality_genus_3-10}
        \begin{tabular}{N{1}N{2}D{2.1}D{3.1}D{3.1}D{5.1}D{3.1}D{5.1}D{3.1}D{3.1}}
            \toprule
            &  & \multicolumn{2}{c}{$g = 4$} & \multicolumn{2}{c}{$g = 6$} & \multicolumn{2}{c}{$g = 8$} & \multicolumn{2}{c}{$g = 10$} \\ \cmidrule(lr){3-4} \cmidrule(lr){5-6} \cmidrule(lr){7-8} \cmidrule(lr){9-10}
            {$n_1$} & {$q$} & {\ref{alg:has_function_new}} & {\ref{alg:has_function_original}} & {\ref{alg:has_function_new}} & {\ref{alg:has_function_original}} & {\ref{alg:has_function_new}} & {\ref{alg:has_function_original}} & {\ref{alg:has_function_new}} & {\ref{alg:has_function_original}} \\ \midrule
            1 & 3 & 0.064 & 0.070 & 0.158 & 0.114 & 1.222 & 1.970 & 4.330 & 9.212 \\
            & 7 & 0.170 & 0.404 & 1.442 & 5.570 & 17.12 & 96.03 & 155.6 & 807.2 \\
            & 13 & 0.618 & 2.560 & 7.798 & 46.81 & 178.9 & 1673 & {-} & {-} \\
            & 23 & 1.376 & 11.51 & 43.55 & 330.2 & {-} & {-} & {-} & {-} \\
            & 31 & 2.814 & 27.89 & 112.8 & 995.1 & {-} & {-} & {-} & {-} \\
            & 59 & 11.10 & 194.7 & {-} & {-} & {-} & {-} & {-} & {-} \\
            & 89 & 28.90 & 667.0 & {-} & {-} & {-} & {-} & {-} & {-} \\ \midrule
            2 & 3 & 0.072 & 0.098 & 0.178 & 0.462 & 1.450 & 7.642 & 3.286 & 32.88 \\
            & 7 & 0.116 & 0.668 & 0.690 & 8.964 & 4.598 & 118.9 & 32.97 & 969.3 \\
            & 13 & 0.190 & 2.798 & 2.048 & 68.23 & 27.83 & 1774 & {-} & {-} \\
            & 23 & 0.454 & 10.89 & 6.748 & 377.1 & 163.9 & 18200 & {-} & {-} \\
            & 31 & 0.566 & 16.54 & 15.36 & 1167 & 572.6 & 93980 & {-} & {-} \\
            & 59 & 2.602 & 96.60 & 161.8 & 13980 & {-} & {-} & {-} & {-} \\
            & 89 & 7.746 & 329.3 & 645.6 & 63580 & {-} & {-} & {-} & {-} \\ \bottomrule
        \end{tabular}
    \end{table}

    \begin{table}
        \centering
        \caption{Timing data for Algorithms \ref{alg:has_function_original} and \ref{alg:has_function_new} on random curves of genus $11 \leq g \leq 13$ over various finite fields $\F_q$. All durations are in seconds.}
        \label{tab:gonality_genus_11-13}
        \begin{tabular}{N{1}N{1}D{3.1}D{5.1}D{3.1}D{5.1}D{3.1}D{5.1}}
            \toprule
            &  & \multicolumn{2}{c}{$g = 11$} & \multicolumn{2}{c}{$g = 12$} & \multicolumn{2}{c}{$g = 13$} \\ \cmidrule(lr){3-4} \cmidrule(lr){5-6} \cmidrule(lr){7-8}
            {$n_1$} & {$q$} & {\ref{alg:has_function_new}} & {\ref{alg:has_function_original}} & {\ref{alg:has_function_new}} & {\ref{alg:has_function_original}} & {\ref{alg:has_function_new}} & {\ref{alg:has_function_original}} \\ \midrule
            1 & 2 & 3.878 & 8.432 & 2.188 & 4.374 & 5.472 & 17.87 \\
            & 3 & 17.76 & 106.0 & 21.93 & 123.4 & 73.33 & 651.7 \\
            & 5 & 515.9 & 4730 & 548.1 & 4477 & {-} & {-} \\ \midrule
            2 & 2 & 3.604 & 25.23 & 4.386 & 26.02 & 6.568 & 64.08 \\
            & 3 & 12.78 & 250.2 & 16.36 & 695.0 & 36.49 & 1813 \\
            & 5 & 109.9 & 5509 & 128.6 & 6912 & 932.7 & 63740 \\
            & 7 & 580.9 & 86360 & 687.4 & 63970 & {-} & {-} \\ \bottomrule
        \end{tabular}
    \end{table}

    We make a few remarks about these results. Firstly, we note that Algorithm \ref{alg:has_function_new} performs much better than Algorithm \ref{alg:has_function_original} in almost all cases, except when the genus of the curve and the size of the base field are both very small. The improvement is most pronounced when the genus of the curve and the size of the base field are both large, in which case we can obtain a reduction in runtime of over two orders of magnitude. This points to the effectiveness of the linear algebraic approach to computing many Riemann-Roch spaces.

    In addition, we note that there is a big discrepancy in the runtime of Algorithm \ref{alg:has_function_new} based on the value of $n_1$, as defined in Lemma \ref{thm:rational_points_trick}. To explain this, in addition to measuring the total time taken by both approaches, we also measured the time taken by each step of the algorithm. This data is summarized in Table \ref{tab:gonality_genus_3-13_breakdown}.

    \begin{table}
        \centering
        \caption{Detailed timing data for some of the runs aggregated in Tables \ref{tab:gonality_genus_3-10} and \ref{tab:gonality_genus_11-13}. For each run, some properties of the curve are listed in the first three columns, followed by the number of closed points $x \in X$ for which series expansions were computed and the number of divisors $D \in \Div X$ whose Riemann-Roch dimensions were computed. The speeds of the various parts of each algorithm are given in the last three columns.}
        \label{tab:gonality_genus_3-13_breakdown}
        \begin{tabular}{N{2}N{2}N{1}N{5}N{6}D{3.1}N{5}D{4.1}}
            \toprule
            & & & & & \multicolumn{2}{c}{Algorithm \ref{alg:has_function_new}} & {Algorithm \ref{alg:has_function_original}} \\ \cmidrule(lr){6-7} \cmidrule(lr){8-8}
            {$g$} & {$q$} & {$n_1$} & {$x$} & {$D$} & {Expansions/s} & {RR/s} & {RR/s} \\ \midrule
            4 & 89 & 1 & 4126 & 464957 & 269.0 & 43490 & 659.5 \\
            4 & 89 & 2 & 97 & 156752 & 116.9 & 22520 & 465.3 \\ \addlinespace
            6 & 31 & 1 & 10421 & 454896 & 137.4 & 42160 & 485.3 \\
            6 & 31 & 2 & 513 & 358190 & 126.7 & 43000 & 375.0 \\ \addlinespace
            8 & 13 & 1 & 7903 & 247468 & 54.57 & 31690 & 150.0 \\
            8 & 13 & 2 & 868 & 208692 & 51.76 & 32310 & 132.4 \\ \addlinespace
            10 & 7 & 1 & 4045 & 86709 & 30.79 & 33870 & 97.01 \\
            10 & 7 & 2 & 735 & 72696 & 28.49 & 43270 & 96.60 \\ \addlinespace
            11 & 5 & 1 & 3478 & 80286 & 8.241 & 43400 & 15.17 \\
            11 & 5 & 2 & 814 & 124746 & 6.335 & 37460 & 9.197 \\ \addlinespace
            13 & 3 & 1 & 551 & 11055 & 7.936 & 40940 & 13.76 \\
            13 & 3 & 2 & 203 & 34567 & 5.235 & 33890 & 7.889 \\ \bottomrule
        \end{tabular}
    \end{table}

    We note that the time taken to precompute the series expansions at a closed point of $X$ is comparable to the time taken to compute the dimension of the Riemann-Roch space of a divisor on $X$ using the method of Hess \cite{hess2002riemannroch}. In fact, the former is around two to four times slower than the latter, and both become slower and slower as the genus of the underlying curve $X$ increases. This is to be expected, as both procedures make use of polynomial manipulations in the function field of $X$.

    On the other hand, once the series expansions have been computed, calculating the dimension of the Riemann-Roch space of a divisor on $X$ using Theorem \ref{thm:rr_matrix} is very fast and does not depend on the underlying curve. On our machine, we are consistently able to compute the dimensions of about 30000 to 40000 Riemann-Roch spaces per second, several orders of magnitude more than is possible using the method of Hess. This reflects the fact that each dimension computation is reduced to computing the dimension of a small matrix with entries in the finite coefficient field of $X$.

    Because of these two dynamics, the runtime of Algorithm \ref{alg:has_function_new} is heavily dependent on the difference between the number of closed points for which series expansions must be computed and the number of divisors whose dimensions must be computed. For curves of small genus over small finite fields, these two are comparable, and as such the runtime of Algorithm \ref{alg:has_function_new} is similar to that of Algorithm \ref{alg:has_function_original}. On the other hand, when the genus of the curve $X$ and the finite field $\F_q$ are large, the number of divisors grows much faster than the number of points. In these cases, the savings obtained by computing the dimensions of the Riemann-Roch spaces using linear algebra heavily outweigh the cost of precomputing the series expansions, and Algorithm \ref{alg:has_function_new} becomes much faster than Algorithm \ref{alg:has_function_original}.
    
    This also explains why Algorithm \ref{alg:has_function_new} is much faster on curves with many rational points. For such curves, the degrees of the closed points for which we need to compute the series expansions is smaller. As such, there are fewer such closed points, which, as explained above, has a large impact on the runtime of Algorithm \ref{alg:has_function_new}. This is particularly noticeable for the modular curves $X_0(140)$, $X_0(180)$ and $X_0(192)$, for which the value of $n_1$ is 3. In these cases, the speedup obtained by Algorithm \ref{alg:has_function_new} over Algorithm \ref{alg:has_function_original} exceeds three orders of magnitude.

    \begin{remark} \label{rmk:x0_198}
        We note that, in the computation of the gonalities of the modular curves $X_0(N)$ described in Table \ref{tab:gonality_x0}, the vast majority of the time is spent on the single modular curve $X_0(198)$. In fact, a large proportion of this time is spent computing power series expansions at a \emph{single} $\F_5$-rational point; at this single $\F_5$-rational point, Algorithm \ref{alg:differential_expansions} takes roughly 60 times longer than at the other $\F_5$-rational points of $X_0(198)$. This illustrates that the existing Magma intrinsics for computing residue class rings are particularly ill-suited to our algorithm at this point. A different approach, perhaps driven by a different representation of the function field or the closed points, may then yield sizable additional performance gains.
    \end{remark}
    
    \subsection{Class groups} \label{sec:classgroup_timings}

    In the case of class groups, we compare our implementation of the class group computation based around Algorithm \ref{alg:relation_collection} to the existing \texttt{ClassNumber} intrinsic in Magma. Our approach to benchmarking both algorithms is as follows. For each curve in one of the three sets described at the start of the section, we ran both our implementation and the \texttt{ClassNumber} intrinsic on the curve, with a timeout of one hour. Unlike as in Section \ref{sec:gonality_timings}, we were however unable to extrapolate the total runtime of the algorithms from their progress after an hour. The data for the random curves of genus $3 \leq g \leq 13$ is given in Table \ref{tab:classgroup_genus_3-13}, while a random subset of data for the modular curves $X_0(N)$ is given in Table \ref{tab:classgroup_x0}. The complete data for the latter can be found in the \texttt{timings} directory of our \texttt{CurveArith} repository. 

    \begin{table}
        \centering
        \caption{Timing data for Algorithms \ref{alg:relation_collection} and the \texttt{ClassNumber} intrinsic on random curves of genus $3 \leq g \leq 13$ over various finite fields $\F_q$. All durations are in seconds. A hyphen indicates that the computation timed out after one hour.}
        \label{tab:classgroup_genus_3-13}
        \begin{threeparttable}
            \begin{tabular}{N{2}D{1.1}D{3.1}D{3.1}D{4.1}D{4.1}D{4.1}D{4.1}D{2.1}}
                \toprule
                & \multicolumn{2}{c}{$g = 4$} & \multicolumn{2}{c}{$g = 7$} & \multicolumn{2}{c}{$g = 10$} & \multicolumn{2}{c}{$g = 13$} \\ \cmidrule(lr){2-3} \cmidrule(lr){4-5} \cmidrule(lr){6-7} \cmidrule(lr){8-9}
                {$q$} & {\ref{alg:relation_collection}} & {Magma} & {\ref{alg:relation_collection}} & {Magma} & {\ref{alg:relation_collection}} & {Magma} & {\ref{alg:relation_collection}} & {Magma} \\ \midrule
                2 & 0.156 & 0.11 & 0.644 & 0.768 & 3.294 & 3.486 & 17.48 & 26.35 \\
                5 & 0.268 & 0.498 & 2.804 & 8.714 & 10.13 & 92.33 & 212.4 & {-} \\
                13 & 0.906 & 1.15 & 3.884 & 54.64 & 77.78 & 3384 {\tnote{a}} & 3027 {\tnote{a}} & {-} \\
                31 & 3.95 & 9.964 & 26.31 & 2107.5 & 683.44 & {-} & {-} & {-} \\
                59 & 2.02 & 83.63 & 115.98 & 1093 & 2229 & {-} & {-} & {-} \\
                97 & 2.858 & 131.5 & 625.9 & 2365 {\tnote{a}} & {-} & {-} & {-} & {-} \\ \bottomrule
            \end{tabular}
            \footnotesize
            \begin{tablenotes}
                \item[a] Not all five of the runs terminated within an hour. The average has been taken substituting 3600 for the other runs, thus underestimating the true value.
            \end{tablenotes}
        \end{threeparttable}
    \end{table}

    \begin{table}
        \centering
        \caption{Timing data for Algorithms \ref{alg:relation_collection} and the \texttt{ClassNumber} intrinsic on a random subset of the modular curves $X_0(N)$ over various finite fields $\F_q$. All durations are in seconds. A hyphen indicates that the computation timed out after one hour.}
        \label{tab:classgroup_x0}
        \begin{tabular}{N{2}N{2}D{2.1}D{3.1}|N{2}N{2}D{4.1}D{4.1}|N{3}N{2}D{4.1}D{4.1}}
            \toprule
            {$N$} & {$q$} & \ref{alg:relation_collection} & {Magma} & $N$ & $q$ & \ref{alg:relation_collection} & {Magma} & $N$ & $q$ & \ref{alg:relation_collection} & {Magma} \\ \midrule
            30 & 31 & 0.51 & 0.35 & 67 & 17 & 2.62 & 6.25 & 98 & 59 & 112.54 & 2915.03 \\
            41 & 31 & 1.41 & 3.13 & 69 & 7 & 0.73 & 2.76 & 99 & 31 & 244.77 & {-} \\
            43 & 17 & 0.42 & 1.46 & 73 & 17 & 1.56 & 4.69 & 100 & 97 & 379.42 & 733.7 \\
            47 & 17 & 0.43 & 1.5 & 76 & 17 & 35.27 & 828.49 & 103 & 7 & 3.98 & 13.78 \\
            47 & 31 & 0.63 & 0.51 & 76 & 31 & 65.38 & 2172.58 & 103 & 17 & 16.71 & 358.82 \\
            51 & 31 & 4.83 & 20.34 & 77 & 59 & 96.39 & {-} & 110 & 7 & 575.09 & {-} \\
            52 & 17 & 1.29 & 3.45 & 78 & 17 & 991.86 & {-} & 112 & 59 & 1706.29 & {-} \\
            54 & 17 & 0.88 & 2.06 & 88 & 97 & 840.88 & {-} & 121 & 31 & 13.51 & 255.42 \\
            55 & 7 & 0.46 & 0.69 & 96 & 7 & 1.44 & 3.84 & 135 & 17 & 1374.07 & {-} \\
            55 & 59 & 20.05 & 171.51 & 97 & 59 & 199.62 & {-} & 139 & 31 & 701.78 & {-} \\ \bottomrule
        \end{tabular}
    \end{table}

    We note that, overall, Algorithm \ref{alg:relation_collection} performs better than the existing algorithm implemented in Magma. This improvement is most notable for curves of large genus over large finite fields, and, in certain cases, can approach two orders of magnitude. This dynamic is largely explained by the same phenomenon as for the gonality algorithms, wherein computing Riemann-Roch spaces using Theorem \ref{thm:rr_matrix} becomes faster only when the number of such spaces to compute is large relative to the number of series expansions which must be calculated. There is, however, significant variability in the obtained data, particularly for the modular curves $X_0(N)$. This, compounded with the timeout threshold, makes the precise quantification of the improvement obtained by Algorithm \ref{alg:relation_collection} difficult.

    However, one may notice that the improvement obtained in the case of class groups is, on the whole, smaller than that obtained for gonalities. This is due to a confluence of two factors. Firstly, the number of Riemann-Roch spaces which must be computed in Algorithm \ref{alg:relation_collection} tends to be much smaller than the number of Riemann-Roch spaces computed in Algorithm \ref{alg:has_function_new}. Indeed, the former only computes enough spaces to find a basis of the lattice of relations, while the latter exhaustively searches through all effective divisors of a given degree.
    
    Secondly, while the work done in Algorithm \ref{alg:has_function_new} almost exclusively consists of computing Riemann-Roch spaces, there are two main bottlenecks in Algorithm \ref{alg:relation_collection}. The first is, as expected, the computation of Riemann-Roch spaces in step 5. However, the second is the smoothness check of step 6a, which is carried out using Algorithm \ref{alg:smoothness_check}. This algorithm relies mainly on polynomial and ideal arithmetic and, as such, can be comparatively slow when run on millions of functions as in Algorithm \ref{alg:relation_collection}. As a result, the improvements brought about by Theorem \ref{thm:rr_matrix} can only serve to eliminate the first bottleneck, with the runtime of Algorithm \ref{alg:relation_collection} dominated in many examples by the second. Work is underway on an alternative version of Algorithm \ref{alg:smoothness_check} which relies solely on linear algebra, which may appear in a future version of our \texttt{CurveArith} package.
	
	\printbibliography
\end{document}